\newcommand{\Fmv}{\mathcal{F}_{N,n}}
\newcommand{\ii}{\underline{\textbf{\textit{i}}}}
\newcommand{\jj}{\underline{\textbf{\textit{j}}}}
\newcommand{\n}{\noindent}
\newcommand{\foam}{\mathbf{Foam}_{N}}
\newcommand{\PF}{\mathbf{Pfoam}}
\newcommand{\SSoer}{\mathcal{SC}_{1,n}}
\newcommand{\SSSoer}{\mathcal{SC}_{2,n}}
\DeclareMathOperator{\sk}{s_\gamma}
\newcommand{\figins}[3] 
{\raisebox{#1pt}{\includegraphics[height=#2 in]{figs/#3}}}
\newtheorem{thm}{Theorem}[section]
\newtheorem{lem}[thm]{Lemma}
\newtheorem{cor}[thm]{Corollary}
\newtheorem{prop}[thm]{Proposition}
\theoremstyle{definition}
\newtheorem{defn}[thm]{Definition}
\newcommand{\bQ}{\mathbb{Q}}
\newcommand{\bC}{\mathbb{C}}
\DeclareMathOperator{\Hom}{Hom}
\long\def\@makecaption#1#2{%
    \vskip 10pt
    \setbox\@tempboxa\hbox{%
\small{#1: }\ignorespaces #2}%
    \ifdim \wd\@tempboxa >\captionwidth {%
        \rightskip=\@captionmargin\leftskip=\@captionmargin
        \unhbox\@tempboxa\par}%
      \else
        \hbox to\hsize{\hfil\box\@tempboxa\hfil}%
    \fi}
\newdimen\@captionmargin\@captionmargin=2\parindent
\newdimen\captionwidth\captionwidth=\hsize
\title{The diagrammatic Soergel category and $sl(N)$-foams, for $N\geq 4$}
\author{Marco Mackaay}
\address{Departamento de Matem\'{a}tica\\ Universidade do Algarve\\ 
Campus de Gambelas\\ 8005-139 Faro\\ Portugal and CAMGSD\\Instituto Superior T\'{e}cnico\\ 
Avenida Rovisco Pais\\ 
1049-001 Lisboa\\ Portugal}
\email{mmackaay@ualg.pt}
\author{Pedro Vaz}
\address{Institut de Math\'ematiques de Jussieu \\ Universit\'e Paris 7 \\   
 175 Rue du Chevaleret, F-75013 Paris, France and  
CAMGSD\\Instituto Superior T\'{e}cnico\\ Avenida Rovisco Pais\\ 1049-001 Lisboa\\ Portugal}
\email{pfortevaz@ualg.pt}
\begin{document}
%
%
% this must be defined locally (to center captions)
\newdimen\captionwidth\captionwidth=\hsize
%
%%%%%%%%%%%%%%%%%%%%%%%%%%%%%%%%%%%%%%%%
%%%                                  %%%
%%%            Abstract              %%%
%%%                                  %%%
%%%%%%%%%%%%%%%%%%%%%%%%%%%%%%%%%%%%%%%%
%
\begin{abstract} 
For each $N\geq 4$, we define a monoidal functor from Elias and Khovanov's 
diagrammatic version of Soergel's category of bimodules to the category of 
$sl(N)$ foams defined by Mackaay, Sto\v{s}i\'c and Vaz. We show that 
through these functors Soergel's category can be obtained from the $sl(N)$ 
foams.  
\end{abstract}
\maketitle
%
%
%%%%%%%%%%%%%%%%%%%%%%%%%%%%%%%%%%%%%%%%
%%%                                  %%%
%%%        Introduction              %%%
%%%                                  %%%
%%%%%%%%%%%%%%%%%%%%%%%%%%%%%%%%%%%%%%%%
\section{Introduction}\label{sec:intro}

In \cite{Soergel} Soergel categorified the Hecke algebra using bimodules. Just 
as the Hecke algebra is important for the construction of the HOMFLY-PT link 
polynomial, so is Soergel's category for the construction of 
Khovanov and Rozansky's HOMFLY-PT link homology~\cite{KhR2}, 
as explained by Khovanov in ~\cite{Kh2}. Elias and Khovanov~\cite{EKh} 
constructed a diagrammatic version of the Soergel category with generators and 
relations, which Elias and Krasner~\cite{EKr} used for a diagrammatic 
construction of Rouquier's complexes associated to braids.  

In \cite{B-N} Bar-Natan gave a new version of Khovanov's~\cite{Kh} 
original link homology, also called the $sl(2)$ link homology, 
using 2d-cobordisms modulo certain relations, which we will call $sl(2)$ 
foams. Using 2d-cobordisms with a particular sort of singularity modulo 
certain relations, which we will call $sl(3)$ foams, 
Khovanov constructed the $sl(3)$ link homology~\cite{Kh1}. Khovanov 
and Rozansky~\cite{KhR1} then constructed the $sl(N)$ link homologies, 
for any $N\geq 1$, using matrix factorizations. These link homologies are 
closely related to the HOMFLY-PT link homology by Rasmussen's spectral 
sequences~\cite{Rasm}, with $E_1$-page isomorphic to the HOMFLY-PT 
homology and converging to the $sl(N)$ homology, for any $N\geq 1$. 
In \cite{MSV1} Mackaay, Sto\v{s}i\'c and Vaz gave an alternative construction 
of these $sl(N)$ link homologies, for $N\geq 4$, using $sl(N)$ foams, which are 
2d-cobordisms with two types of singularities satisfying relations determined 
by a formula from quantum field theory, originally obtained by 
Kapustin and Li~\cite{KL} and later adapted by Khovanov and 
Rozansky~\cite{KhR3}.

Khovanov and Rozansky in \cite{KhR1} and \cite{KhR2} and Rasmussen in 
\cite{Rasm} used matrix factorizations for their constructions. Therefore, the 
question arises whether their results can be understood in diagrammatic terms 
and what could be learned from that. In~\cite{V1} Vaz constructed functors 
from Elias and Khovanov's diagrammatic version of Soergel's category to the 
categories of $sl(2)$ and $sl(3)$ foams. 
In this paper we construct the analogous functors from the same version of 
Soergel's category to the category of $sl(N)$ foams for 
$N\geq 4$. To complete the picture, one would like to construct 
the analogues of Rasmussen's spectral sequences in this setting. But 
for that, one would first have to understand 
the Hochschild homology of bimodules in diagrammatic terms, which has not been 
accomplished yet. However for this, one would first have to understand the Hochschild 
homology of bimodules in diagrammatic terms, which has not been accomplished yet.
Hochschild homology plays an integral part of the construction.
Nevertheless, there is an interesting result which can already be shown using the 
functors in this paper. In a certain technical sense, which we will make 
precise in Proposition~\ref{prop:faithfulness}, Soergel's category can be 
obtained from the $sl(N)$ foams, and therefore from the Kapustin-Li formula, 
using our functors. This result should be compared to Rasmussen's Theorem 1 
in~\cite{Rasm}.       
 
We have tried to make the paper as self-contained as possible, but the reader 
should definitely leaf through \cite{EKh}, \cite{EKr}, \cite{MSV1} and 
\cite{V1} before reading the rest of this paper.

In Section 2 we recall Elias and Khovanov's version of Soergel's category. 
In Section 3 we review $sl(N)$ foams, as 
defined by Mackaay, Sto\v{s}i\'{c} and Vaz. Section 4 contains the new results: 
the definition of our functors, the proof that they are 
indeed monoidal, and a statement on faithfulness in Proposition~\ref{prop:faithfulness}.

%%%%%%%%%%%%%%%%%%%%%%%%%%%%%%%%%%%%%%%%
%%%                                  %%%
%%%           Soergel                %%%
%%%                                  %%%
%%%%%%%%%%%%%%%%%%%%%%%%%%%%%%%%%%%%%%%%
\section{Elias and Khovanov's version of Soergel's category}
\label{sec:soergel}

This section is a reminder of the diagrammatics for Soergel categories introduced by 
Elias and Khovanov in~\cite{EKh}. 
Actually we give the version which they explained in Section 4.5 of~\cite{EKh} and which 
can be found in detail in ~\cite{EKr}.

Fix a positive integer $n$. The category $\mathcal{SC}_1$ is the category whose objects 
are finite length sequences of points on the real line, where each point is colored by an 
integer between $1$ and $n$. 
We read sequences of points from left to right. 
Two colors $i$ and $j$ are called adjacent if $\vert i-j\vert=1$ and distant if 
$\vert i-j\vert >1$.  
The morphisms of $\mathcal{SC}_1$ are given by generators modulo relations. 
A morphism of $\mathcal{SC}_1$ is a $\bC$-linear combination of planar diagrams 
constructed by horizontal and vertical gluings of the following generators 
(by convention no label means a generic color $j$):
\begin{itemize}
\item Generators involving only one color:
\begin{equation*}
\xymatrix@R=1.0mm{
\figins{-15}{0.5}{enddot}
&
\figins{-9}{0.5}{startdot}
&
\figins{-15}{0.55}{merge}
&
\figins{-15}{0.55}{split}
\\
\text{EndDot} & \text{StartDot} & \text{Merge} & \text{Split}
}
\end{equation*}

\medskip

It is useful to define the cap and cup as
\begin{equation*}
\figins{-17}{0.55}{dvert-u}\
\equiv\
\figins{-17}{0.55}{cap-v}\
\mspace{50mu}
\figins{-17}{0.55}{dvert-d}\
\equiv\
\figins{-17}{0.55}{cup-v}
\end{equation*}

\medskip

\item Generators involving two colors:
\begin{itemize}
\item The 4-valent vertex, with distant colors,
\begin{equation*}
\labellist
\tiny\hair 2pt
\pinlabel $i$ at  -4 -10
\pinlabel $j$ at 134 -10
\endlabellist
\figins{-15}{0.55}{4vert}\vspace{1.5ex}
\end{equation*}
\item and the 6-valent vertex, with adjacent colors $i$ and $j$
\begin{equation*}
\labellist
\tiny\hair 2pt
\pinlabel $i$   at  -4 -10 
\pinlabel $j$ at  66 -12
\pinlabel $j$ at 232 -12
\pinlabel $i$   at 298 -10
\endlabellist
\figins{-17}{0.55}{6vertd}
\mspace{55mu}
\figins{-17}{0.55}{6vertu}\ .
\vspace{1.5ex}
\end{equation*}
\end{itemize}
\end{itemize}

\n read from bottom to top. In this setting a diagram represents a morphism from the 
bottom bounda\-ry to the top.
We can add a new colored point to a sequence and this endows $\mathcal{SC}_1$ with a 
monoidal structure on objects, which is extended to morphisms in the obvious way. 
Composition of morphisms consists of stacking one diagram on top of the other.

We consider our diagrams modulo the following relations.

%%%%%%%%%%%%%%%%%%%%%%%%%%%
\n\emph{``Isotopy'' relations:}
\begin{equation}\label{eq:adj}
\figins{-17}{0.55}{biadj-l}\
=\
\figins{-17}{0.55}{line}\
=\
\figins{-17}{0.55}{biadj-r}
\end{equation}

\begin{equation}\label{eq:curldot}
\figins{-17}{0.55}{dcurl-ul}\
=\
\figins{-17}{0.55}{enddot}\
=\
\figins{-17}{0.55}{dcurl-ur}
\end{equation}

\begin{equation}\label{eq:v3rot}
\figins{-17}{0.55}{yl}\
=\
\figins{-17}{0.55}{merge}\
=\
\figins{-17}{0.55}{ly}
\end{equation}

\begin{equation}\label{eq:v4rot}
\figins{-17}{0.55}{4vertr-l}\
=\
\figins{-17}{0.55}{4vert}\
=\
\figins{-17}{0.55}{4vertr-r}
\end{equation}

\begin{equation}\label{eq:v6rot}
\figins{-17}{0.55}{6vertu-l}\
=\
\figins{-17}{0.55}{6vertu}\
=\
\figins{-17}{0.55}{6vertu-r}
\end{equation}

The relations are presented in terms of diagrams with generic colorings. 
Because of isotopy invariance, one may draw a diagram with a boundary on the side,
and view it as a morphism in $\mathcal{SC}_1$ by either bending the line up or down.
By the same reasoning, a horizontal line corresponds to a sequence of cups and caps.

%%%%%%%%%%%%%%%%%%%%%%%%%
\medskip
\n\emph{One color relations:}

\begin{equation}\label{eq:dumbrot}
\figins{-16}{0.5}{dumbells}\
=\
\figins{-14}{0.45}{dumbellh}
\end{equation}

\begin{equation}\label{eq:lollipop}
\figins{-17}{0.55}{lollipop-u}\
=\
0
\end{equation}

\begin{equation}\label{eq:deltam}
\figins{-17}{0.55}{startenddot-edge}\
+\
\figins{-17}{0.55}{edge-startenddot}\
=\ 2\ 
\figins{-17}{0.55}{matches-ud}
\end{equation}

%%%%%%%%%%%%%%%%%%%%%%%%
\medskip
\n\emph{Two distant colors:}
\begin{equation}\label{eq:reid2dist}
\figins{-32}{0.9}{reid2}\
=\
\figins{-32}{0.9}{line-br}
\end{equation}

\begin{equation}\label{eq:slidedotdist}
\figins{-16}{0.5}{4vertdot}\
=\
\figins{-16}{0.5}{4vertnodot}
\end{equation}

\begin{equation}\label{eq:slide3v}
\figins{-17}{0.55}{splitslide-u}\
=\
\figins{-17}{0.55}{splitslide-d}
\end{equation}

%%%%%%%%%%%%%%%%%%%%%
\medskip
\n\emph{Two adjacent colors:}
\begin{equation}\label{eq:dot6v}
\figins{-16}{0.5}{6vertdotd}\
=\
\figins{-16}{0.5}{mergedots}\
+\
\figins{-16}{0.5}{capcupdot}
\end{equation}

\begin{equation}\label{eq:reid3}
\figins{-30}{0.85}{id-r3}\
=\
\figins{-30}{0.85}{reid3}\
-\
\figins{-30}{0.85}{dumbell-dd}
\end{equation}

\begin{equation}\label{eq:dumbsq}
\figins{-30}{0.85}{dumbellsquare}\
=\
\figins{-30}{0.85}{dumbellsquareh}
\end{equation}

\begin{equation}\label{eq:slidenext}
\figins{-17}{0.55}{sedot-edge-d}\
-\
\figins{-17}{0.55}{edge-sedot-d}\
=\
\frac{1}{2}
\Biggl(\
\figins{-17}{0.55}{edge-startenddot}\
-\
\figins{-17}{0.55}{startenddot-edge}\
\Biggr)
\end{equation}

%%%%%%%%%%%%%%%%%%%%%%%
\medskip
\n\emph{Relations involving three colors:}
(adjacency is determined by the vertices which appear)
\begin{equation}\label{eq:slide6v}
\figins{-18}{0.6}{6vert-slidel}\
=\
\figins{-18}{0.6}{6vert-slider}
\end{equation}

\begin{equation}\label{eq:slide4v}
\figins{-18}{0.6}{4vert-slidel}\
=\
\figins{-18}{0.6}{4vert-slider}
\end{equation}

\begin{equation}\label{eq:dumbdumbsquare}
\figins{-30}{0.85}{dumbdumbsquare}\
=\
\figins{-30}{0.85}{dumbdumbsquareh}
\end{equation}

%%%%%%%%%%%%%%%%%%%%%%%
Furthermore, we also have a useful implication of relation~\eqref{eq:deltam}

\begin{equation}\label{eq:reid2}
\figins{-17}{0.55}{edgesi}\
=\ \frac{1}{2}\ \Biggl(\ 
\figins{-17}{0.55}{dumb-4vert}\
+\
\figins{-17}{0.55}{4vert-dumb}\
\Biggr)
\end{equation}

%%%%%%%%%%%%%%%%%%%%%%%

Introduce a $q$-grading on $\mathcal{SC}_1$ declaring that dots have degree $1$, 
trivalent vertices have degree $-1$ 
and $4$- and $6$-valent vertices have degree $0$.

\begin{defn}
The category $\mathcal{SC}_2$ is the category containing all direct sums and grading 
shifts of objects in 
$\mathcal{SC}_1$ and whose morphisms are the grading preserving morphisms from 
$\mathcal{SC}_1$.
\end{defn}

\begin{defn}
The category $\mathcal{SC}$ is the Karoubi envelope of the category $\mathcal{SC}_2$.
\end{defn}

Elias and Khovanov's main result in~\cite{EKh} is the following theorem.
\begin{thm}[Elias-Khovanov]
The category $\mathcal{SC}$ is equivalent to the Soergel category in~\cite{Soergel}.
\end{thm}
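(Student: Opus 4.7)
The plan is to construct a $\bC$-linear monoidal functor $\mathcal{F}\colon \mathcal{SC}_1\to\mathrm{SBim}_n$ to the category of graded Soergel bimodules over $R=\bC[x_1,\ldots,x_{n+1}]$ (with $S_{n+1}$ acting by permutation of variables), extend it via the universal properties of direct sum, grading shift, and Karoubi envelope to $\mathcal{F}\colon\mathcal{SC}\to\mathrm{SBim}_n$, and then show that $\mathcal{F}$ is a degree-preserving equivalence of monoidal categories. On objects, send a sequence $(i_1,\ldots,i_k)$ to the Bott--Samelson bimodule $B_{i_1}\otimes_R\cdots\otimes_R B_{i_k}$, where $B_i:=R\otimes_{R^{s_i}}R(1)$ and $s_i$ is the simple transposition $(i,i+1)\in S_{n+1}$.

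On morphisms, assign to StartDot and EndDot the unit and counit of the restriction--induction adjunction for $R^{s_i}\subset R$; to Merge and Split the multiplication $B_i\otimes_R B_i\to B_i(1)$ and its dual comultiplication $B_i\to B_i\otimes_R B_i(-1)$ arising from the Frobenius structure; to the 4-valent vertex in distant colors the swap isomorphism $B_iB_j\cong B_jB_i$ available for $|i-j|\ge 2$; and to the two 6-valent vertices in adjacent colors the inclusion and projection between $B_iB_jB_i$ and $B_jB_iB_j$ coming from their common indecomposable summand $B_{iji}=B_{jij}$. The declared $q$-grading on diagrams (dots in degree $1$, trivalent vertices in degree $-1$, $4$- and $6$-valent vertices in degree $0$) matches the degrees of these bimodule maps. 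The first step is to verify that every relation \eqref{eq:adj}--\eqref{eq:dumbdumbsquare} maps to a true identity of bimodule homomorphisms; this is a direct, if tedious, computation using the Frobenius structure of $B_i$ together with the standard identities for Demazure operators on $R$.

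Essential surjectivity of $\mathcal{F}$ onto the Karoubi envelope then follows from Soergel's classification of indecomposable objects in $\mathrm{SBim}_n$ as direct summands of Bott--Samelson bimodules. The main obstacle is proving fullness and faithfulness. The strategy is a graded dimension count: Soergel's Hom formula gives an explicit polynomial in $q$ for $\dim_q\mathrm{Hom}_{\mathrm{SBim}_n}(B_{\underline{i}},B_{\underline{j}})$, and one must produce a $\bC$-basis of $\mathrm{Hom}_{\mathcal{SC}_1}(\underline{i},\underline{j})$ whose graded cardinality matches this polynomial. To do this, use the relations \eqref{eq:adj}--\eqref{eq:dumbdumbsquare} to reduce an arbitrary diagram to a normal form (for instance, Libedinsky-style double leaves) via a termination argument on diagrammatic complexity, and then establish linear independence of these normal forms either by pairing against dual double leaves under $\mathcal{F}$ or by induction on the Bruhat order combined with a careful analysis of leading terms. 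The combinatorial reduction to normal form and the term-by-term matching of graded dimensions constitute the technical heart of the argument in \cite{EKh}.
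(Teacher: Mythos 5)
This theorem is not proved in the present paper; it is quoted verbatim as Elias and Khovanov's main result and attributed to \cite{EKh}, which is the only place it is established. So there is no in-paper proof to compare against. Your sketch is, in broad outline, a faithful reconstruction of the Elias--Khovanov argument: define a $\bC$-linear monoidal functor from $\mathcal{SC}_1$ to Bott--Samelson bimodules sending dots to the (co)unit of the $R^{s_i}\subset R$ adjunction, trivalent vertices to the Frobenius (co)multiplication, the 4-valent vertex to the distant commutation isomorphism, and the 6-valent vertex to the map through the common summand $B_{iji}=B_{jij}$; verify the defining relations by direct computation with Demazure operators; pass to the Karoubi envelope for essential surjectivity; and finally prove full faithfulness by a graded dimension count against Soergel's Hom formula.

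Two caveats, neither fatal. First, the ``Libedinsky-style double leaves'' machinery you invoke for the normal-form/spanning-set step is anachronistic here: it is the now-standard route (systematized by Libedinsky and later Elias--Williamson), but \cite{EKh} proceeds by a more hands-on reduction to morphisms between objects of small length and an explicit comparison with bases of Hom spaces of Soergel bimodules. If you want a proof ``as in \cite{EKh}'' you should replace the double-leaves step by their direct analysis; if you merely want \emph{a} proof, your route works and is arguably cleaner. Second, characterizing the 6-valent vertex ``via the common indecomposable summand'' determines the map only up to a nonzero scalar; Elias--Khovanov pin it down with an explicit formula on bimodule elements, and the scalar matters for the relations (e.g.\ the two-color relations involving both 4- and 6-valent vertices) to hold on the nose rather than up to scalar. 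You should either fix this normalization explicitly or note that the relations determine it.
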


From Soergel's results from~\cite{Soergel} we have the following corollary.
\begin{cor}
The Grothendieck algebra of $\mathcal{SC}$ is isomorphic to the Hecke algebra.
\end{cor}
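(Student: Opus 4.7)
The plan is essentially a two-line argument, since the corollary is being deduced from two results that are already in hand: the preceding Elias--Khovanov theorem identifying $\mathcal{SC}$ with Soergel's original category, and Soergel's categorification theorem from~\cite{Soergel} asserting that the split Grothendieck algebra of his bimodule category is isomorphic to the Hecke algebra (of type $A_n$, in our setting).

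First I would recall that the split Grothendieck group is an invariant of an additive (graded) category up to additive equivalence: any equivalence of additive, graded categories that preserves direct sums and grading shifts induces an isomorphism of the corresponding Grothendieck groups, and if the equivalence is monoidal the resulting isomorphism is a ring isomorphism. The $q$-grading on $\mathcal{SC}_1$ introduced above (dots degree $1$, trivalent vertices degree $-1$, $4$- and $6$-valent vertices degree $0$) makes $\mathcal{SC}_2$, and hence its Karoubi envelope $\mathcal{SC}$, a graded additive monoidal category, so the Grothendieck algebra is well defined and carries a $\mathbb{Z}[q,q^{-1}]$-module structure via the grading shift.

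Next I would invoke the Elias--Khovanov theorem stated just above: it gives a monoidal, grading-preserving equivalence between $\mathcal{SC}$ and Soergel's category. Applying the functoriality of $K_0$ discussed in the previous paragraph, this equivalence transports Soergel's isomorphism in~\cite{Soergel} to an isomorphism between $K_0(\mathcal{SC})$ and the Hecke algebra, which is exactly the claim of the corollary.

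There is no real obstacle: the only thing one needs to check is that the equivalence provided by Elias--Khovanov indeed respects the structure used in the definition of the Grothendieck algebra (direct sums, grading shifts, tensor product). Since this is built into the construction of the functor in~\cite{EKh}, the corollary is immediate and requires no further computation.
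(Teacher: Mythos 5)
Your proposal is correct and matches the paper's (implicit) argument: combine the Elias--Khovanov equivalence with Soergel's categorification theorem, using that the split Grothendieck algebra is invariant under monoidal, grading-preserving equivalences of additive categories. The paper states this as an immediate corollary with no written proof beyond the remark that one must use the split Grothendieck algebra since $\mathcal{SC}$ is additive but not abelian, a point you also address.
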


Notice that $\mathcal{SC}$ is an additive category but not abelian and we are using the 
(additive) split Grothendieck algebra.

In Subsection~\ref{ssec:funct} we will define a a family of functors from $\SSoer$ to the category of 
$sl(N)$ foams, one for each $N\geq 4$. 
These functors are grading preserving, so they obviously extend uniquely to $\SSSoer$. 
By the universality of the Karoubi envelope, they also extend uniquely to functors between the respective 
Karoubi envelopes.

%%%%%%%%%%%%%%%%%%%%%%%%%%%%%%%%%%%%%%%%
%%%                                  %%%
%%%        Foams                     %%%
%%%                                  %%%
%%%%%%%%%%%%%%%%%%%%%%%%%%%%%%%%%%%%%%%%
\section{Foams}
\subsection{Pre-foams}
\label{sec:pre-foam}

In this section we recall the basic facts about foams. For 
the definition of the Kapustin-Li formula, for proofs of the relations between 
foams and for other details see~\cite{MSV1} and~\cite{V}. The foams in this paper 
are composed of three types of facets: simple, double and 
triple facets. The double facets are coloured and the triple facets are marked 
to show the difference.
\begin{figure}[h]
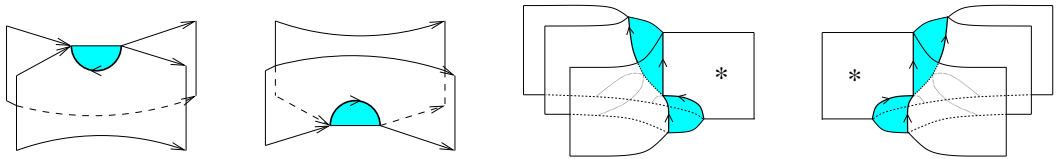

\figins{0}{0.7}{ssaddle} \qquad
\figins{0}{0.7}{ssaddle_ud} \qquad
\figins{-2}{0.8}{glue-left} \qquad
\figins{-2}{0.8}{glue-right}
\caption{Some elementary pre-foams}
\label{fig:elemfoams}
\end{figure}
Intersecting such a foam with a generic plane results in a web, as long as the 
plane avoids the singularities where six facets meet, such as on the right in 
Figure~\ref{fig:elemfoams}. 

\begin{defn}
\label{defn:pre-foam}
Let $\sk$ be a finite oriented closed $4$-valent graph, 
which may contain disjoint circles and loose endpoints. 
We assume that all edges of $\sk$ are oriented.  
A cycle in $\sk$ is defined to be a circle or a closed sequence of edges 
which form a 
piece-wise linear circle. 
Let $\Sigma$ be a compact orientable possibly disconnected surface, 
whose connected components are simple, double or triple, 
denoted by white, coloured or marked. 
Each component can have a boundary consisting of several disjoint circles and can have 
additional decorations which we discuss below.     
A closed {\em pre-foam} $u$ is the identification space $\Sigma/\sk$ obtained by glueing 
boundary circles of $\Sigma$ to cycles in $\sk$ such that every edge and circle 
in $\sk$ is glued 
to exactly three boundary circles of $\Sigma$ and such that for any point $p\in \sk$:    
\begin{enumerate}
\item if $p$ is an interior point of an edge, then $p$ has a neighborhood homeomorphic to the 
letter Y times an interval with exactly one of the facets being double, 
and at most one of them 
being triple. For an example see 
Figure~\ref{fig:elemfoams};
\item if $p$ is a vertex of $\sk$, then it has a neighborhood as shown in 
Figure~\ref{fig:elemfoams}. 
\end{enumerate}
We call $\sk$ the \emph{singular graph}, its edges and vertices \emph{singular arcs} and 
\emph{singular vertices}, and the connected components of $u - \sk$ the \emph{facets}.

Furthermore the facets can be decorated with dots. A simple facet can only have black 
dots ($\bdot$), a double facet can also have white dots ($\wdot$), and a triple facet besides 
black and white dots can have double dots ($\bwdot$). Dots can move freely on a facet 
but are not allowed to cross singular arcs. 
%See Figure~\ref{fig:simpl-2pol} for an example of a pre-foam. 
\end{defn}

%\begin{figure}[h]
%$$
% \figins{0}{0.6}{poly}$$
%\caption{A pre-foam}
%\label{fig:simpl-2pol}
%\end{figure}

Note that the cycles to which the boundaries of the simple and the triple facets are 
glued are always oriented, whereas the ones to which the boundaries of the double 
facets are glued are not, as can be seen in Figure~\ref{fig:elemfoams}. 
Note also that there are two types of singular vertices. Given a singular vertex $v$, 
there are precisely two singular edges which meet at $v$ and bound a triple facet: 
one oriented 
toward $v$, denoted $e_1$, and one oriented away from $v$, denoted $e_2$. 
If we use the ``left hand rule'', then the cyclic ordering of the facets 
incident to $e_1$ and $e_2$ is either $(3,2,1)$ and $(3,1,2)$ respectively, or the other way 
around. We say that $v$ is of type I in the first case and of type II in the second case. 
When we go around a triple facet we see that there have to 
be as many singular vertices of 
type I as there are of type II for the cyclic orderings of the facets to match up. This shows 
that for a closed pre-foam the number of singular vertices of type I is equal to the number 
of singular vertices of type II.     

We can intersect a pre-foam $u$ generically by a plane $W$ in order to get a 
closed web, 
as long as the plane avoids the vertices of $\sk$. The orientation of $\sk$ 
determines the orientation of the simple edges of the web 
according to the convention in Figure~\ref{fig:orientations}.

\begin{figure}[h]
$$\xymatrix@R=1mm{
\figins{0}{0.8}{orientation-in} & 
\figins{0}{0.8}{orientation-3in} &
\figins{0}{0.8}{orientation-3out2} \\
\figins{0}{0.8}{orientation-out}  & 
\figins{0}{0.8}{orientation-3out} &
\figins{0}{0.8}{orientation-3in2}
}$$
\caption{Orientations near a singular arc}
\label{fig:orientations}
\end{figure}
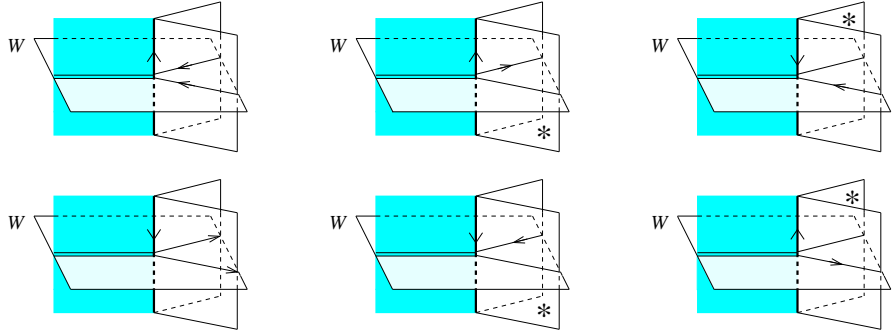

Suppose that for all but a finite number of values $i\in ]0,1[$, 
the plane $W\times {i}$ intersects $u$ generically. Suppose also that 
$W\times 0$ and $W\times 1$ intersect $u$ generically and outside the vertices 
of $\sk$. Furthermore, suppose that $D\subset W$ is a disc in W and 
$C\subset D$ its boundary circle, such that 
$C\times [0,1]\cap u$ is a disjoint union of vertical line segments. 
This means that we are assuming that $s_{\gamma}$ does not intersect 
$C\times [0,1]$. We call $D\times [0,1]\cap u$ an {\em open} pre-foam between 
the {\em open} webs 
$D\times \{0\}\cap u$ and $D\times \{1\}\cap u$.   
Interpreted as morphisms we read open pre-foams from bottom to top, and their 
composition consists of placing one pre-foam on top of the other, as long as 
their boundaries are isotopic and the orientations of the simple edges 
coincide. 

\begin{defn} Let $\PF$ be the category whose objects are 
webs and whose morphisms are $\bQ$-linear combinations of 
isotopy classes of pre-foams with the obvious identity pre-foams and 
composition rule.  
\end{defn}

We now define the $q$-degree of a pre-foam. 
Let $u$ be a pre-foam, $u_1$, $u_2$ and $u_3$ the disjoint union of its 
simple and double and marked facets respectively and 
$\sk(u)$ its singular graph. Furthermore, let $b_1$, $b_2$ and $b_3$ be the 
number of simple, double and marked vertical boundary edges of $u$, 
respectively. 
Define the partial $q$-gradings of $u$ as 
\begin{eqnarray*}
q_i(u)    &=& \chi(u_i)-\frac{1}{2}\chi(\partial u_i\cap\partial u)-
\frac{1}{2}b_i, 
\qquad i=1,2,3 \\
q_{\sk}(u) &=& \chi(\sk(u))-\frac{1}{2}\chi(\partial \sk(u)).
\end{eqnarray*}
where $\chi$ is the Euler characteristic and $\partial$ denotes the boundary.

\begin{defn}
Let $u$ be a pre-foam with $d_\bdot$ dots of type $\bdot$, $d_\wdot$ dots of type 
$\wdot$ and $d_\bwdot$ dots of type $\bwdot$. The $q$-grading of $u$ is given 
by
\begin{equation}
q(u)= -\sum_{i=1}^{3}i(N-i)q_i(u) - 2(N-2)q_{\sk}(u) + 
2d_\bdot + 4d_\wdot +6d_\bwdot.
\end{equation}
\end{defn}

The following result is a direct consequence of the definitions. 
\begin{lem}
$q(u)$ is additive under the glueing of pre-foams.
\end{lem}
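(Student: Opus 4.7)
The plan is to verify that each summand in the definition of $q(u)$ is additive under glueing. Let $u$ and $v$ be composable pre-foams, with $u$ stacked on top of $v$ and glued along a common interface web $\Gamma$, and write $w = u\circ v$. The dot counts $d_{\bdot}$, $d_{\wdot}$, $d_{\bwdot}$ are preserved because dots lie in the interiors of facets and none are introduced or removed by glueing, so these terms are trivially additive.

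Next, fix $i\in\{1,2,3\}$ and let $\Gamma_i$ denote the subcomplex of $\Gamma$ consisting of its type-$i$ edges together with their endpoints. The type-$i$ facets of $w$ are obtained by identifying the type-$i$ boundary arcs of $u_i$ and $v_i$ along $\Gamma_i$, so inclusion--exclusion gives $\chi(w_i) = \chi(u_i) + \chi(v_i) - \chi(\Gamma_i)$. The horizontal portion of the facet boundary lying in $\partial w$ loses two copies of $\Gamma_i$ (the bottom of $u$ and the top of $v$, each previously contributing to $\partial u_i\cap\partial u$ and $\partial v_i\cap\partial v$ respectively), yielding
\[
\chi(\partial w_i \cap \partial w) = \chi(\partial u_i \cap \partial u) + \chi(\partial v_i \cap \partial v) - 2\chi(\Gamma_i),
\]
while the vertical boundary edges simply add, $b_i(w) = b_i(u) + b_i(v)$. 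Substituting into the definition of $q_i$, the two $\chi(\Gamma_i)$ contributions cancel and one obtains $q_i(w) = q_i(u) + q_i(v)$. An analogous argument handles $q_{\sk}$: the singular graph $\sk(w)$ is built from $\sk(u)$ and $\sk(v)$ by identifying the loose endpoints meeting the interface, which are exactly the trivalent vertices of $\Gamma$; writing $n_{\Gamma}$ for this number one has $\chi(\sk(w)) = \chi(\sk(u)) + \chi(\sk(v)) - n_{\Gamma}$ and $\chi(\partial\sk(w)) = \chi(\partial\sk(u)) + \chi(\partial\sk(v)) - 2n_{\Gamma}$, so the $n_{\Gamma}$ terms cancel and $q_{\sk}(w) = q_{\sk}(u) + q_{\sk}(v)$. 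Multiplying by the prefactors $-i(N-i)$ and $-2(N-2)$ preserves additivity, giving $q(w) = q(u) + q(v)$.

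The main point requiring care is the bookkeeping of the boundary terms: one must read $\chi(\partial u_i\cap\partial u)$ as accounting for the horizontal portion of the facet boundary (which is exactly what is lost along the interface under glueing), with the vertical portion tracked separately by $b_i$, and the $b_i$ must be additive because the vertical boundary arcs of $u$ and $v$ are counted as abutting edges in $w$ rather than being combined into single longer arcs. Once these conventions are in place the verification reduces to routine inclusion--exclusion, with the $-\tfrac{1}{2}$ coefficients in the definitions of $q_i$ and $q_{\sk}$ being exactly the weights needed to make the interface contributions cancel.
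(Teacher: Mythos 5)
The paper itself offers no proof for this lemma, stating only that it is ``a direct consequence of the definitions,'' so there is nothing to compare against; your argument has to stand on its own. The overall strategy---verify additivity of each summand of $q$ by inclusion--exclusion along the interface web---is exactly the right one, and the treatment of the dot terms, of $\chi(w_i)=\chi(u_i)+\chi(v_i)-\chi(\Gamma_i)$, and of $q_{\sk}$ are all correct. However, the bookkeeping of the boundary terms for $q_i$ contains two compensating errors, and the closing paragraph in which you try to justify the bookkeeping actually commits you to a reading of the definitions that is not the paper's.

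First, $b_i$ counts the vertical line segments of type $i$ in $C\times[0,1]\cap u$. These segments match up end-to-end across the interface, so composition does not change their number: $b_i(w)=b_i(u)=b_i(v)$, not $b_i(u)+b_i(v)$. Your attempted justification---that the abutting arcs should be ``counted as abutting edges in $w$ rather than being combined into single longer arcs''---is not what the definition says; a composed pre-foam has exactly as many vertical boundary segments as each factor. Second, with $\partial u$ taken to be the full topological boundary (horizontal plus vertical) as the paper's notation indicates, the formula $\chi(\partial w_i\cap\partial w)=\chi(\partial u_i\cap\partial u)+\chi(\partial v_i\cap\partial v)-2\chi(\Gamma_i)$ is off by $b_i$. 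Gluing identifies the two copies of $\Gamma_i$ and then removes the interior of $\Gamma_i$ from the boundary, but the $b_i$ corner points of $\Gamma_i$ on $C$ \emph{remain} in $\partial w_i\cap\partial w$ (as interior points of the joined vertical arcs). Running the vertex/edge count carefully gives $\chi(\partial w_i\cap\partial w)=\chi(\partial u_i\cap\partial u)+\chi(\partial v_i\cap\partial v)-2\chi(\Gamma_i)+b_i$. A sanity check: for the identity foam on one strand, $u_1$ is a square, $\partial u_1\cap\partial u$ is its boundary circle with $\chi=0$, $b_1=2$, so $q_1(u)=1-0-1=0$ as it should be; your formulas instead predict $\chi(\partial w_1\cap\partial w)=-2$ for the double stack, whereas it is plainly $0$. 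The two errors of $\mp\tfrac12 b_i$ cancel in the final expression for $q_i(w)$, which is why you land on the correct conclusion, but as written the intermediate steps are false. The fix is small: replace $b_i(w)=b_i(u)+b_i(v)$ by $b_i(w)=b_i(u)=b_i(v)$, include the $+b_i$ in the boundary Euler characteristic, drop the reinterpretation paragraph, and the inclusion--exclusion closes correctly.

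Your alternative suggestion---read $\chi(\partial u_i\cap\partial u)$ as counting only the horizontal boundary while $b_i$ tracks the vertical part with the ``abutting'' convention---is internally consistent but is not the paper's convention, and it assigns the identity foam the $q$-degree $N-1$ per simple sheet rather than $0$, which conflicts with the grading the category is supposed to carry.
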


We denote a simple facet with $i$ dots by 
$$\figins{-8}{0.3}{plan-i}.$$  
Recall that the two-variable Schur polynomial $\pi_{k,m}$ can be expressed in 
terms of the elementary symmetric polynomials $\pi_{1,0}$ and $\pi_{1,1}$. 
By convention, the latter correspond to $\bdot$ and $\wdot$ on a double facet 
respectively, so that   
$$\figins{-8}{0.3}{dplan-km}$$
is defined to be the linear combination of dotted double facets corresponding 
to the expression of $\pi_{k,m}$ in terms of $\pi_{1,0}$ and $\pi_{1,1}$. 
Analogously we can express the three-variable Schur polynomial 
$\pi_{p,q,r}$ in terms of the elementary symmetric polynomials 
$\pi_{1,0,0}$, $\pi_{1,1,0}$ and $\pi_{1,1,1}$ 
By convention, the latter correspond to $\bdot$, $\wdot$ and $\bwdot$ on a 
triple facet respectively, so we can make sense of   
$$\figins{-8}{0.3}{plan-pqr}.$$

%%%%%%%%%%%%%%%%%%%
% local relations %
%%%%%%%%%%%%%%%%%%%

\subsection{Foams}
\label{sec:foamN}
In~\cite{MSV1, V} we gave a precise definition of the Kapustin-Li formula, 
following Khovanov and Rozansky's work~\cite{KhR3}. We will not repeat that 
definition here, since it is complicated and unnecessary for our purposes in 
this paper. The only thing one needs to remember is that the Kapustin-Li 
formula associates a number to any closed prefoam and that 
those numbers have very special properties, some of which we will recall 
below. By $\langle u \rangle_{KL}$ we denote the Kapustin-Li evaluation 
of a closed pre-foam $u$.  
\begin{defn}
\label{defn:foam}
The category $\foam$ is the quotient of the category $\PF$ by the 
kernel of $\langle\ \rangle_{KL}$, i.e. by the following identifications: 
for any webs $\Gamma$, $\Gamma'$ 
and finite sets $f_i\in\mbox{Hom}_{\PF}\left(\Gamma,\Gamma'\right)$ 
and $c_i\in\bQ$ we impose the relations 
$$\sum\limits_{i}c_if_i=0\quad \Leftrightarrow\quad
\sum\limits_{i}c_i\langle \overline{f_i}\rangle_{KL}=0,$$ 
for any fixed way of closing the $f_i$, denoted by $\overline{f_i}$. 
By ``fixed''  
we mean that all the $f_i$ are closed in the same way. The 
morphisms of $\foam$ are called {\em foams}. 
\end{defn}

In the next proposition we recall those relations in $\foam$ that we 
need in the sequel. For their proofs and other relations we refer to 
\cite{MSV1}.

\begin{prop}
\label{prop:principal rels1}
The following identities hold in $\foam$: 

%%%%%%%%%%%%%%%%%%%
% dot conversion  %
%%%%%%%%%%%%%%%%%%%
 
(The {\em dot conversion} relations) 

$$\figins{-8}{0.3}{plan-i}=0\quad\mbox{if}\quad i\geq N.$$ 
$$\quad\ \
\figins{-8}{0.3}{dplan-km}=0\quad\mbox{if}\quad k\geq N-1.$$
$$\quad\ \
\figins{-8}{0.3}{plan-pqr}=0\quad\mbox{if}\quad p\geq N-2.$$

%%%%%%%%%%%%%%%%%
% dot migration %
%%%%%%%%%%%%%%%%%

\bigskip

(The {\em dot migration} relations)
\begin{equation}\label{eq:dotmigr}
\begin{split}
\figins{-12}{0.4}{pdots_b1} &=  \quad
\figins{-12}{0.4}{pdots_1} \ + \
\figins{-12}{0.4}{pdots_2}
\\[1.2ex] 
\figins{-12}{0.4}{pdots_w1} &=  \quad
\figins{-12}{0.4}{pdots_12}
\\[1.5ex]
\figins{-12}{0.4}{pdots-star-b} \ \ \ &=  \
\figins{-12}{0.4}{pdots-star-db} \ + \
\figins{-12}{0.4}{pdots_4} 
\\[1.2ex]
\figins{-12}{0.4}{pdots-star-w} \ \ \ &=  \
\figins{-12}{0.4}{pdotsd_w1} \ + \
\figins{-12}{0.4}{pdots-star-bdb}
\\[1.2ex]
\figins{-12}{0.4}{pdots-star-bw} \ \ \ &=  \
\figins{-12}{0.4}{pdots-wb}
\end{split}
\end{equation}

\bigskip

(The {\em neck cutting} relations\footnote{These were called 
\emph{cutting neck} relations in~\cite{MSV1, V}.}) 
$$
\figins{-27}{0.8}{cylinder}=
\sum\limits_{i=0}^{N-1}
\figins{-27}{0.8}{cneck-i}
\quad\text{(NC$_1$)}
$$
$$   
\figins{-28}{0.8}{dcylinder}=-
\sum\limits_{0\leq j\leq i\leq N-2}
\figins{-28}{0.8}{dcneck-ij}
\quad\text{(NC$_2$)}
\qquad\qquad
\figins{-28}{0.8}{cylinder-star}=
-\sum\limits_{0\leq k\leq j\leq i\leq N-3}
\figins{-28}{0.8}{cneck-ijk}
\quad\text{(NC$_*$)}
$$

\bigskip

(The {\em sphere} relations)
$$
\figins{-10}{0.35}{sph-i}=
\begin{cases}1, & i=N-1 \\ 0, & \text{else}\end{cases}\quad 
\text{(S$_1$)} \qquad\quad
\figins{-10}{0.35}{dsph-ij}=
\begin{cases}-1, & i=j=N-2 \\ 0, & \text{else}\end{cases}\quad 
\text{(S$_2$)}
$$

$$
\figins{-10}{0.35}{sph-star-ijk}=
\begin{cases}-1, & i=j=k=N-3 \\ 0, & \text{else}.\end{cases}\quad
\text{(S$_*$)}
$$

\bigskip

(The {\em $\Theta$-foam} relations)
$$
\figins{-10}{0.35}{theta12} = -1  = - \
\figins{-10}{0.35}{theta21} \quad
 (\ThetaGraph)
\qquad \ \text{and} \ \qquad
\figins{-10}{0.42}{theta-star-32} = -1  = - \
\figins{-16}{0.42}{theta-star-23} \quad
 (\ThetaGraph_*).$$
Inverting the orientation of the singular circle of $(\ThetaGraph_*)$ inverts the sign of the 
corresponding foam. A theta-foam with dots on the double facet can be transformed into a 
theta-foam with dots only on the other two facets, using the dot migration relations.

%%%%%%%%%%%%%%%%%%%
% the MP-relation %
%%%%%%%%%%%%%%%%%%%

\bigskip

(The {\em Matveev-Piergalini} relation) 
\begin{equation}\tag{MP}\label{eq:mp}
\figins{-24}{0.8}{sh_move1} =
\figins{-24}{0.8}{sh_move2},\quad
\figins{-24}{0.8}{sh_move1-star} =
\figins{-24}{0.8}{sh_move2-star}.
\end{equation}

(The {\em disc removal} relations)

%%%%%%
% RD %
%%%%%%

\begin{equation}\tag{RD$_1$}\label{eq:rd1}
\figins{-8}{0.3}{remdisk} \ =  
\figins{-8}{0.3}{cupdcap-h} \ - \ 
\figins{-8}{0.3}{cupcapd-h}
\end{equation}

\medskip

\begin{equation}\tag{RD$_2$}\label{eq:rd2}
\figins{-8}{0.3}{remdisk2} \ =  
\figins{-8}{0.3}{cup2-dcap-h} \ - \ 
\figins{-8}{0.3}{cup1-dcapb-h} \ + \ 
\figins{-8}{0.3}{cup-dcapw-h}
\end{equation}

\medskip

%%%%%%%%%%%%%%%%%
% digon removal %
%%%%%%%%%%%%%%%%%
(The {\em digon removal} relations)
\begin{equation}\tag{DR$_1$}\label{eq:dr1}
\figins{-20}{0.6}{digonfid}=
\figins{-20}{0.6}{dig_rem1}-
\figins{-20}{0.6}{dig_rem2}
\end{equation}

\medskip

\begin{equation}\tag{DR$_{3_1}$}\label{eq:dr31}
\figins{-30}{0.7}{digonfid_b-star1} =\ -
\figins{-30}{0.7}{dig_rem_b1-star}+
\figins{-30}{0.7}{dig_rem_b2-star}-
\figins{-30}{0.7}{dig_rem_b3-star}
\end{equation}

\medskip

\begin{equation}\tag{DR$_{3_2}$}\label{eq:dr32}
\figins{-30}{0.7}{digonfid_b-star2} 
= \sum_{0\leq j\leq i\leq N-3}\
\figins{-30}{0.7}{dig_rem-32}
\end{equation}

\medskip

%%%%%%%%%%%%%%%%%%
% square removal %
%%%%%%%%%%%%%%%%%%
(The {\em square removal} relations)
\begin{equation}\tag{SqR$_1$}\label{eq:sqr1}
\figins{-31}{0.9}{sqface_id}= 
-\;\figins{-31}{0.9}{sq_rem2}\quad + \sum_{a+b+c+d=N-3}\
\figins{-31}{0.9}{sq_rem1}
\end{equation}

\begin{equation}\tag{SqR$_2$}\label{eq:sqr2}
\figins{-34}{0.8}{sqface_b_id}=
-\;\figins{-34}{0.8}{sq_rem_b1}-
\figins{-34}{0.8}{sq_rem_b2}
\end{equation}

\bigskip

\begin{equation}
\label{eq:bubble-pqri}
\figins{-18}{0.6}{bubble-pqri}=
\begin{cases}
-\figins{-17}{0.4}{dplan-qr}   &\text{if}\quad p=N-3-i\\ 
-\figins{-17}{0.4}{dplan-ppqq} &\text{if}\quad r=N-1-i\\ 
 \figins{-17}{0.4}{dplan-ppr}  &\text{if}\quad q=N-2-i\\
 \qquad 0                      &\text{else}
\end{cases}
\end{equation}

\bigskip

\begin{equation}
\label{eq:bubble-ij}
\figins{-20}{0.6}{bubble-ij}=
\begin{cases}
\quad -\figins{-13}{0.35}{dplan-iij} & \text{if}\quad i>j\geq 0 \\
\figins{-13}{0.35}{dplan-jji} & \text{if}\quad j>i\geq 0 \\
\qquad 0 & \text{if}\quad i=j
\end{cases} 
\end{equation}
\end{prop}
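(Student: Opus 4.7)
The strategy is to verify each identity as an equation of morphisms in $\foam$, which by Definition~\ref{defn:foam} means showing that the Kapustin--Li evaluation of the corresponding alternating sum of closed pre-foams vanishes for some, hence any, fixed closure. Because $\langle\,\cdot\,\rangle_{KL}$ is local and multiplicative under gluing of pre-foams along common boundary webs, each relation---presented only as a local picture---reduces to a single computation on one canonical closure, from which the statement for arbitrary closures follows automatically.

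I would proceed in layers, bootstrapping from the simplest identities. First, I would verify the sphere relations $(\mathrm{S}_1)$, $(\mathrm{S}_2)$, $(\mathrm{S}_*)$ and the two $\Theta$-foam relations, which are direct Kapustin--Li integrals over closed pre-foams of minimal complexity and produce residue-type expressions that vanish outside the top-degree case. The dot conversion relations then encode the fact that simple, double, and triple facets carry the Jacobi ring of the potential $W(x)=x^{N+1}/(N+1)$ and its two- and three-variable analogues, so that $\bdot^N$ on a simple facet, and the corresponding Schur polynomial dots $\pi_{k,m}$ and $\pi_{p,q,r}$ above the listed thresholds on double and triple facets, are sent to $0$. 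The dot migration relations follow from Newton-type identities for elementary symmetric polynomials across a singular arc that separates a simple from a double facet, or a double from a triple facet.

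With these primitives in place, the neck cutting relations $(\mathrm{NC}_1)$, $(\mathrm{NC}_2)$, $(\mathrm{NC}_*)$ are obtained by recognising $\sum_{i}\bdot^{i}\otimes\bdot^{N-1-i}$ (and its two- and three-variable analogues involving $\wdot$ and $\bwdot$) as the Casimir/diagonal element dual to the trace form defined by the sphere and theta evaluations, so that the cylinder equals this diagonal sum of capped discs. The Matveev--Piergalini move \eqref{eq:mp} is a direct local KL computation near the singular vertex. The disc removal, digon removal, square removal, and both bubble relations are then derived from repeated neck cutting inside a small ball of the pre-foam, followed by evaluation of the resulting spheres and theta-foams, and finally matching coefficients of the surviving normal-form dotted diagrams.

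The main obstacle is the combinatorial bookkeeping in the more intricate identities---especially $(\mathrm{DR}_{3_2})$, $(\mathrm{SqR}_1)$, and the bubble relations \eqref{eq:bubble-pqri} and \eqref{eq:bubble-ij}---where the coefficients are indexed by multi-indices and the Schur polynomial expansions interact nontrivially with the dot migration and disc removal rules. Rather than grind these out by hand, I would follow \cite{MSV1, V}: use the graded dimensions of the relevant Hom-spaces (which the KL formula makes explicit) to constrain each identity up to a small number of undetermined constants, and then pin those constants down by pairing both sides with enough test closures whose KL evaluation reduces to products of sphere and theta-foam values already computed in the first layer.
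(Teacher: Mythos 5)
The paper does not prove Proposition~\ref{prop:principal rels1}: the text preceding the proposition reads ``In the next proposition we recall those relations in $\foam$ that we need in the sequel. For their proofs and other relations we refer to \cite{MSV1}.'' So there is no internal proof to compare against---the statement is a black box imported from~\cite{MSV1} (see also~\cite{V}).

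That said, your sketch is a plausible high-level reconstruction of the strategy those references actually use: base-case KL evaluations of spheres and theta-foams; dot conversion from the Jacobi-ring structure on each facet type; neck cutting; and, for the removal and bubble relations, a combination of local manipulations with rank/dimension counts that constrain each identity up to a few scalars, which are then pinned down by pairing with test closures. A few points deserve correction or caution, though. First, the dot migration relations do not come from Newton's identities; they follow directly from the identification of the alphabet on the higher facet with the disjoint union of the alphabets on the adjacent lower facets along the singular arc, so that the relevant elementary symmetric polynomials literally agree across the seam. Second, ``the cylinder equals this diagonal sum of capped discs'' is the statement of neck cutting, not a proof; one must first establish that the pairing given by the sphere/theta evaluations is nondegenerate on the graded pieces, which in~\cite{MSV1} is a substantive argument involving the graded dimensions of $\Hom$-spaces, not an automatic consequence of ``recognising the Casimir.'' Third, the Matveev--Piergalini move~\eqref{eq:mp} is one of the harder technical lemmas in~\cite{MSV1}: it expresses a symmetry of the KL integrand near a singular vertex that requires a careful residue calculation, not merely a ``direct local computation.'' Your outline is consistent with the cited proof in spirit, but as written it would need all of this machinery actually carried out before it could be called a proof of the proposition.
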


%%%%%%%%%%%%%%%%%%%%%%%%
%%%                  %%% 
%%%   The functors   %%%
%%%                  %%%
%%%%%%%%%%%%%%%%%%%%%%%%
\section{The functors $\Fmv$}
\label{ssec:funct}

Let $n\geq 1$ and $N\geq 4$ be arbitrary but fixed. In this section we define a monoidal functor $\Fmv$ between the categories 
$\SSoer$ and $\foam$. 

\n\emph{On objects:} $\Fmv$ sends the empty sequence to $1_n$ and the one-term sequence $(j)$ to $w_j$:
\begin{equation*}
\labellist
\pinlabel $\dotsm$ at 120 66
\pinlabel $\dotsm$ at 544 66
\pinlabel $\dotsm$ at 830 66
\tiny\hair 2pt
\pinlabel $1$   at   6 -15
\pinlabel $2$   at  62 -15
\pinlabel $n$   at 176 -16
\pinlabel $1$   at 488 -15
\pinlabel $j$   at 655 -15
\pinlabel $j+1$ at 717 -16
\pinlabel $n$   at 886 -15
\endlabellist
(\emptyset)\ \longmapsto\figins{-20}{0.7}{1n}
\mspace{100mu}
(j)\ \longmapsto \figins{-20}{0.7}{wi}\vspace*{2ex}
\end{equation*}
with $\Fmv(jk)$ given by the vertical composite $w_{j}w_{k}$.

\n\emph{On morphisms:}

\begin{itemize}
\item The empty diagram is sent to $n$ parallel vertical sheets:
\begin{equation*}
\labellist
\pinlabel $\dotsm$ at 185 120
\tiny\hair 2pt
\pinlabel $1$   at   0 -14
\pinlabel $2$   at  59 -14
\pinlabel $n-1$ at 219 -14
\pinlabel $n$   at 282 -14
\endlabellist
\emptyset \ \longmapsto \figins{-28}{1.0}{nsheets}\vspace*{2ex}
\end{equation*}
\item The vertical line coloured $j$ is sent to the identity cobordism of $w_j$:
\begin{equation*}
\labellist
\tiny\hair 2pt
\pinlabel $j$   at -10  60
\pinlabel $j$   at 105 -65
\pinlabel $j+1$ at 170 -66
\endlabellist
\figins{-16}{0.5}{line}\ \
\longmapsto\
\figins{-32}{1.0}{idwi}
\vspace*{2ex}
\end{equation*}
The remaining $n-2$ vertical parallel sheets on the r.h.s are not shown for simplicity, a convention that we will follow from now on.

\item The \emph{StartDot} and \emph{EndDot} morphisms are sent to the zip and the unzip respectively:
\begin{equation*}
\labellist
\tiny\hair 2pt
\pinlabel $j$   at -10  60
\pinlabel $j$   at  95 -65
\pinlabel $j+1$ at 154 -66
\endlabellist
\figins{-16}{0.5}{startdot}
\longmapsto\
\figins{-32}{1.0}{zip}\
\mspace{140mu}
\labellist
\tiny\hair 2pt
\pinlabel $j$   at -10  60
\pinlabel $j$   at  95 -65
\pinlabel $j+1$ at 154 -66
\endlabellist
\figins{-16}{0.5}{enddot}
\longmapsto\
\figins{-32}{1.0}{unzip}
\vspace*{2ex}
\end{equation*}

\item \emph{Merge} and \emph{Split} are sent to cup and cap cobordisms:
\begin{equation*}
\labellist
\tiny\hair 2pt
\pinlabel $j$   at  45  95
\pinlabel $j$   at 209 -65
\pinlabel $j+1$ at 279 -66
\endlabellist
\figins{-16}{0.6}{merge}
\longmapsto\
\figins{-32}{1.1}{digoncap}\
\mspace{80mu}
\labellist
\tiny\hair 2pt
\pinlabel $j$   at  45  45
\pinlabel $j$   at 209 -65  
\pinlabel $j+1$ at 279 -66
\endlabellist
\figins{-16}{0.6}{split}
\longmapsto \
\figins{-32}{1.1}{digoncup}
\vspace*{2ex}
\end{equation*}

\item The \emph{4-valent vertex} with distant colors. For $j+1<k$ we have:
\begin{equation*}
\labellist
\tiny\hair 2pt
\pinlabel $k$   at  -5 -12
\pinlabel $j$   at 128 -10
\pinlabel $j$      at 205 -61
\pinlabel $j+1$    at 270 -62
\pinlabel $k$      at 385 -61
\pinlabel $k+1$    at 445 -62
\pinlabel $\dotsm$ at 335 -62
\endlabellist
\figins{-16}{0.6}{4vert}
\longmapsto\ 
\figins{-32}{1.1}{permijs}
\vspace*{2ex}
\end{equation*}
The case $j>k+1$ is given by reflexion around a horizontal plane.

\item For the \emph{6-valent vertices} we have:
\begin{equation*}
\labellist
\tiny\hair 2pt
\pinlabel $j+1$ at -5 -10
\pinlabel $j$   at 65 -10
\pinlabel $j$   at 205  17  
\pinlabel $j+1$ at 210  -5  
\pinlabel $j+2$ at 220 -29
\endlabellist
\figins{-18}{0.6}{6vertu}\
\longmapsto\ \ \
\figins{-28}{0.9}{F6vertN}
\vspace*{2ex}
\end{equation*}
\end{itemize}
The case with the colors switched is given by reflection in a vertical plane.
Notice that $\Fmv$ respects the gradings of the morphisms.

\begin{prop}
$\Fmv$ is a monoidal functor.
\end{prop}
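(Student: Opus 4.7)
The plan is as follows. Monoidality on objects is built into the definition of $\Fmv$, since a sequence of colors is sent to the vertical stacking of the corresponding webs $w_j$, and horizontal juxtaposition of Soergel diagrams corresponds to placing the image foams side by side on disjoint blocks of sheets. The substance of the proposition is therefore to verify that $\Fmv$ is well-defined on morphisms, i.e.\ that each of the defining relations \eqref{eq:adj}--\eqref{eq:reid2} of $\SSoer$ is sent by $\Fmv$ to a valid identity in $\foam$.

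I would organize the check following the groupings of Section~\ref{sec:soergel}. The isotopy relations \eqref{eq:adj}--\eqref{eq:v6rot} hold essentially on the nose: after applying $\Fmv$, both sides are pre-foams that are ambient-isotopic rel boundary, hence equal in $\foam$. The one-color relations \eqref{eq:dumbrot}--\eqref{eq:deltam} are local identities between foams on two consecutive sheets, which follow from the sphere relation (S$_1$), the $\Theta$-foam relation, and digon removal \eqref{eq:dr1}. The two-distant-color relations \eqref{eq:reid2dist}--\eqref{eq:slide3v} are automatic, since their images live on disjoint blocks of sheets and therefore commute past one another.

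The harder part is the two-adjacent-color relations \eqref{eq:dot6v}--\eqref{eq:slidenext} and the three-color relations \eqref{eq:slide6v}--\eqref{eq:dumbdumbsquare}. For each of these I would reduce to a local foam computation on three or four consecutive sheets, combining Matveev--Piergalini~\eqref{eq:mp}, neck cutting (NC$_1$, NC$_2$, NC$_*$), digon removal \eqref{eq:dr1}, \eqref{eq:dr31}, \eqref{eq:dr32}, square removal \eqref{eq:sqr1}, \eqref{eq:sqr2}, the bubble evaluations \eqref{eq:bubble-pqri}--\eqref{eq:bubble-ij}, and dot migration \eqref{eq:dotmigr}, with the dot-conversion relations used to truncate high-order dot powers. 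I expect the Reidemeister-$3$ type relation \eqref{eq:reid3}, the dumbbell-square relations \eqref{eq:dumbsq} and \eqref{eq:dumbdumbsquare}, and the fractional relation \eqref{eq:slidenext} to be the most delicate: each involves a closed web face that must first be opened using \eqref{eq:sqr1} or \eqref{eq:sqr2}, after which the numerical constants (signs from the $\Theta$-foam identities and the factor $\frac{1}{2}$ in \eqref{eq:slidenext}) must be matched by careful bookkeeping of the bubble evaluations.

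The main obstacle is therefore not conceptual but combinatorial: there is no single clever trick, only a systematic enumeration, and the bulk of the effort consists in correctly tracking how dots migrate between simple, double, and triple facets under \eqref{eq:dotmigr}. Many of the adjacent-color computations are in fact direct adaptations of the $sl(3)$ case treated in \cite{V1}, carried along with extra inert parallel sheets, so that genuinely new work is required only where triple facets truly intervene. Once functoriality is established, grading preservation is automatic from the compatibility of the declared degrees of the Soergel generators with the $q$-degree of their $\Fmv$-images, as already observed in the construction.
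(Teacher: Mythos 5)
Your overall organization matches the paper's: monoidality on objects is immediate, the isotopy relations~\eqref{eq:adj}--\eqref{eq:v6rot} and the distant-color relations~\eqref{eq:reid2dist}--\eqref{eq:slide3v} are isotopies of pre-foams, the one-color relations use~\eqref{eq:rd1}, dot migration~\eqref{eq:dotmigr} and the bubble evaluation~\eqref{eq:bubble-ij}, and the adjacent-color relations~\eqref{eq:dot6v}--\eqref{eq:dumbsq} are attacked by opening square faces via~\eqref{eq:sqr1} or~\eqref{eq:sqr2}, removing singular vertices by~\eqref{eq:mp}, and cleaning up with digon removal and bubble evaluations. Two small corrections: relations~\eqref{eq:slide6v} and~\eqref{eq:slide4v} are also isotopies of the image foams (you need not group them with the delicate computations), and~\eqref{eq:reid2} is a consequence of~\eqref{eq:deltam}, so it is not a separate relation to verify.

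The real divergence is your treatment of the three-color relation~\eqref{eq:dumbdumbsquare}. You propose to handle it the same way as~\eqref{eq:reid3} and~\eqref{eq:dumbsq}, i.e.\ by opening faces with square and digon removal and tracking bubble evaluations directly. The paper deliberately does \emph{not} do this: the web sandwiched between the two sides has several interlocking closed faces and a brute-force expansion would be extremely painful, so instead the authors first observe that the graded $\Hom$-space in $\foam$ between the relevant bottom and top webs is one-dimensional in degree zero (a statement they verify by extracting a single coefficient of the MOY polynomial of the sandwiched web). This forces the two images to be proportional. They then pin down the proportionality constant as $1$ by closing both Soergel diagrams with dots on all free ends, reducing the closures to sums of coloured \emph{StartDot-EndDot} diagrams via relations already verified, and applying~\eqref{eq:Fdot} to see both closures have the same nonzero image. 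Your plan omits this dimension-counting shortcut. It is not obviously a fatal gap --- in principle the direct computation could be pushed through --- but you should at least signal that~\eqref{eq:dumbdumbsquare} is qualitatively harder than~\eqref{eq:reid3} and~\eqref{eq:dumbsq}, and that a uniqueness-up-to-scalar argument is a cleaner alternative to the combinatorial expansion you envision. Without that remark, a reader following your plan may well get stuck on exactly this relation.
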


\begin{proof}
The assignment given by $\Fmv$ clearly respects the monoidal structures of $\SSoer$ and $\foam$. 
So we only need to show that $\Fmv$ is a functor, i.e. it respects the relations~\eqref{eq:adj} to~\eqref{eq:dumbdumbsquare} of Section~\ref{sec:soergel}.

%%%%%%%%%%%%%%%%%%%%%%%%%%%%%%%
\medskip
\n\emph{''Isotopy relations'':} 
Relations~\eqref{eq:adj} to~\eqref{eq:v6rot} are straightforward to check and correspond to isotopies of their images under $\Fmv$. For the sake of completeness we show the first equality in~\eqref{eq:adj}. We have
\begin{equation*}
\labellist
\tiny\hair 2pt
\pinlabel $j$  at -6  40        \pinlabel $j$ at 720 40
\pinlabel $j$ at 160 -52        \pinlabel $j+1$ at 209 -53
\pinlabel $j$ at 378 -52        \pinlabel $j+1$ at 431 -53
\endlabellist
\Fmv\Biggl(\
\figins{-17}{0.55}{biadj-l}\
\Biggr)
=\
\figins{-32}{1.0}{idwi-zzag}\ 
\cong\
\figins{-32}{1.0}{idwi}\ 
=\
\Fmv
\Biggl(\
\figins{-17}{0.55}{line}\
\Biggr)\vspace*{2ex}
\end{equation*}

%%%%%%%%%%%%%%%%%%%%%%%%%%%%%
\medskip
\n\emph{One color relations:} 
For relation~\eqref{eq:dumbrot} we have
\begin{equation*}
\Fmv\Biggl(\
\figins{-14}{0.45}{dumbellh}\
\Biggr)
\cong
\Fmv\Biggl(\
\figins{-14}{0.45}{dumbhcurl}\
\Biggr)
\cong
\Fmv\Biggl(\
\figins{-14}{0.45}{dumbells}\
\Biggr).
\end{equation*}
where the first equivalence follows from relations~\eqref{eq:adj} and~\eqref{eq:v3rot} and the second from isotopy of 
the foams involved.

For relation~\eqref{eq:lollipop} we have
\begin{equation*}
\labellist
\tiny\hair 2pt
\pinlabel $j$ at 26 45            
\pinlabel $j$   at 235 -93 
\pinlabel $j+1$ at 340 -94
\endlabellist
\Fmv\Biggl(\
\figins{-16}{0.5}{lollipop-u}\
\Biggr)
=\
\figins{-32}{1.0}{bubbleunzip}\
=\
0\mspace{30mu}\text{by Equation~\eqref{eq:bubble-ij}}.
\vspace*{2ex}
\end{equation*}

Relation~\eqref{eq:deltam} requires some more work. We have
\begin{align*}
\labellist
\tiny\hair 2pt
\pinlabel $j$ at -2 22    \pinlabel $j$ at -2 130  
\pinlabel $j$ at 130 -70 \pinlabel $j+1$ at 200 -71
\pinlabel $j$ at 405 -70 \pinlabel $j+1$ at 475 -71
\pinlabel $j$ at 720 -70 \pinlabel $j+1$ at 790 -71
\endlabellist
\Fmv\Biggl(\
\figins{-17}{0.55}{matches-ud}\
\Biggr)
&=\ 
\figins{-32}{1.0}{unzipzip}\
=\
\figins{-32}{1.0}{idwi-dot}\
-\
\figins{-32}{1.0}{dot-idwi}
\vspace*{2ex}
\end{align*}

\bigskip

\n where the second equality follow from the~\eqref{eq:dr1} relation. We also have
\begin{equation}\label{eq:Fdot}
\labellist
\tiny\hair 2pt
\pinlabel $j$ at -13   16
\pinlabel $j$ at 140 -130   \pinlabel $j+1$ at 220 -131
\pinlabel $j$ at 440 -130   \pinlabel $j+1$ at 590 -131
\pinlabel $j$ at 785 -130   \pinlabel $j+1$ at 940 -131
\endlabellist
\Fmv\biggl(\
\figins{-6}{0.25}{startenddot}\
\biggr)\
=\
\figins{-32}{1.0}{zipunzip}\
=\
\figins{-32}{1.0}{sheet-dot-l}\
-\
\figins{-32}{1.0}{dot-sheet-l}
\vspace*{2ex}
\end{equation}

\medskip

Using~\eqref{eq:dotmigr} we obtain
\begin{align}
\label{eq:edge-dots}
\labellist
\tiny\hair 2pt
\pinlabel $j$ at -10  60
\pinlabel $j$ at 220 -78  \pinlabel $j+1$ at 301 -79
\pinlabel $j$ at 540 -78  \pinlabel $j+1$ at 621 -79
\endlabellist
\Fmv\Biggl(\
\figins{-17}{0.55}{edge-startenddot}\
\Biggr)
&=\ 2\
\figins{-32}{1.0}{idwi-dot}\
-\
\figins{-32}{1.0}{idwid}
\displaybreak[0]
\intertext{and}
\label{eq:dots-edge}
\labellist
\tiny\hair 2pt
\pinlabel $j$ at  65  60
\pinlabel $j$ at 220 -78  \pinlabel $j+1$ at 301 -79
\pinlabel $j$ at 540 -78  \pinlabel $j+1$ at 621 -79
\endlabellist
\Fmv\Biggl(\
\figins{-17}{0.55}{startenddot-edge}\
\Biggr)
&=\ -2\
\figins{-32}{1.0}{dot-idwi}\
+\
\figins{-32}{1.0}{idwid}
\end{align}
and, therefore, we have that
\begin{equation*}
\Fmv
\Biggl(
\figins{-17}{0.55}{startenddot-edge}\
\Biggr)
+\
\Fmv
\Biggl(\
\figins{-17}{0.55}{edge-startenddot}\
\Biggr)
=\ 2\ 
\Fmv
\Biggl(\
\figins{-17}{0.55}{matches-ud}\
\Biggr).
\vspace*{2ex}
\end{equation*}

%%%%%%%%%%%%%%%%%%%%%%%%%%%%
\medskip
\n\emph{Two distant colors:} 
Relations~\eqref{eq:reid2dist} to~\eqref{eq:slide3v} correspond to isotopies of the foams 
involved and are straightforward to check.

%%%%%%%%%%%%%%%%%%%%%%%%%
\medskip
\n\emph{Adjacent colors:} 
We prove the case where ``blue'' corresponds to $j$ and ``red'' corresponds to 
$j+1$. The relations with colors reversed are proved the same way.
To prove relation~\eqref{eq:dot6v} we first notice that using the~\eqref{eq:mp} move we get
\begin{equation*}
\labellist
\tiny\hair 2pt
\pinlabel $j$   at 280  0
\pinlabel $j+1$ at 282 -35
\pinlabel $j+2$ at 300 -70
\endlabellist
\Fmv\Biggl(\
\figins{-12}{0.4}{6vertdotd}\
\Biggr)\
\cong\mspace{22mu}
\figins{-28}{0.9}{F6vertdotN}\ .
\vspace*{2ex}
\end{equation*}
Apply~\eqref{eq:sqr1} to the simple-double square tube perpendicular to the triple facet to
obtain two terms. 
The first term contains a double-triple digon tube  which is the left hand side of 
the~\eqref{eq:dr32} relation rotated by $180^0$ around a vertical axis.
Next apply the~\eqref{eq:dr32} relation and use~\eqref{eq:mp} to remove the four 
singular vertices which results in simple-triple bubbles (with dots) in the double facets. 
Using Equation~\eqref{eq:bubble-pqri} to remove these bubbles gives
\begin{equation*}
\labellist
\tiny\hair 2pt
\pinlabel $j$   at  -9 -19
\pinlabel $j+1$ at  54 -20
\pinlabel $j+2$ at 122 -20
\endlabellist
\figins{-34}{1.2}{edge-dig-2zip}
\vspace*{2ex}
\end{equation*}
which is $\Fmv\bigl(\figins{-6}{0.22}{capcupdot}\bigr)$. The second term contains
\begin{equation*}
\sum\limits_{a+b+c+d=N-3}\;
\labellist
\tiny\hair 2pt
\pinlabel $a$ at  40 150
\pinlabel $b$ at 204 153
\pinlabel $c$ at 150  90
\endlabellist
\figins{-30}{0.8}{hbubble}\
\vspace*{2ex}
\end{equation*}
behind a simple facet with $d$ dots (notice that all dots are on simple facets).
Using the~\eqref{eq:mp} relation to get a simple-triple bubble in the double facet, 
followed by~\eqref{eq:rd2} and (S$_1$) we obtain
\begin{equation*}
\labellist
\tiny\hair 2pt
\pinlabel $j$   at -10 -19
\pinlabel $j+1$ at  46 -20
\pinlabel $j+2$ at 126 -20
\endlabellist
\figins{-34}{1.2}{digzip2}\
\vspace*{2ex}
\end{equation*}
which equals $\Fmv\bigl(\figins{-6}{0.22}{dotcupdot}\bigr)$.

We now prove relation~\eqref{eq:reid3}. We have an isotopy equivalence
\begin{equation*}
\labellist
\tiny\hair 2pt
\pinlabel $j$   at 272  25  
\pinlabel $j+1$ at 284 -18 
\pinlabel $j+2$ at 321 -59
\endlabellist
\Fmv
\left(\
\figins{-26}{0.8}{id-r3}\
\right)
 \cong\ \
\figins{-34}{1.2}{idwiwjwi}\ 
\vspace*{2ex}
\end{equation*}
Notice that $\Fmv\bigl(\figins{-5}{0.21}{id-r3-sm}\bigr)$ is the l.h.s. of the
~\eqref{eq:sqr2} relation.
The first term on the r.h.s. of~\eqref{eq:sqr2} is isotopic to 
$-\Fmv\bigl(\figins{-6}{0.22}{dumbell-dd-short}\bigr)$. 
For the second term on the r.h.s. of~\eqref{eq:sqr2}
we notice that
$\Fmv\bigl(\figins{-6}{0.22}{reid3-short}\bigr)$ 
contains 
\begin{equation}
\label{eq:sbub}
\figins{-34}{0.8}{hbubble2}\ 
\vspace*{2ex}
\end{equation}
Applying~\eqref{eq:dr31} followed by~\eqref{eq:mp} to remove the singular vertices creating
simple-simple bubbles on the two double facets and using Equation~\eqref{eq:bubble-ij}
to remove these bubbles we get that 
$\Fmv\bigl(\figins{-6}{0.22}{reid3-short}\bigr)$ 
is the second term on the r.h.s. of~\eqref{eq:sqr2}.

We now prove relation~\eqref{eq:dumbsq} in the form
\begin{equation*}
\figins{-26}{0.75}{dumbellsquare-up}\
=\
\figins{-26}{0.75}{dumbellsquareh-up}\ .
\end{equation*}
The image of the l.h.s. also contains a bit like the one in Equation~\eqref{eq:sbub}.
Simplifying it like we did in the proof of~\eqref{eq:reid3} we obtain the
$\Fmv$ reduces to
\begin{equation}\label{eq:lhs}
\labellist
\tiny\hair 2pt
\pinlabel $j$   at -55  60 
\pinlabel $j+1$ at -45  28  
\pinlabel $j+2$ at -30  -2
\endlabellist
-\ \
\figins{-34}{1.0}{halfdisks-h}
\vspace*{2ex}
\end{equation}
For the r.h.s. we have
\begin{equation*}
\labellist
\tiny\hair 2pt
\pinlabel $j$   at -45  60 
\pinlabel $j+1$ at -35  28  
\pinlabel $j+2$ at -20  -2
\endlabellist
\figins{-34}{1.0}{halfdisks-v}
\vspace*{2ex}
\end{equation*}
Using~\eqref{eq:dr31} on the vertical digon, followed by~\eqref{eq:mp} and 
the Bubble relation~\eqref{eq:bubble-ij}, we obtain~\eqref{eq:lhs}.

Relation~\eqref{eq:slidenext} follows from straightforward computation and is left to the reader.

%%%%%%%%%%%%%%%%%%%%%%%%%%%%%%%%%%%%%%%%%%
\medskip
\n\emph{Relations involving three colors:} 
Relations~\eqref{eq:slide6v} and~\eqref{eq:slide4v} follow from isotopies of the foams involved. 
To show that $\Fmv$ respects relation~\eqref{eq:dumbdumbsquare} we use a 
different type of argument. First of all, we note that the images under 
$\Fmv$ of both sides of relation~\eqref{eq:dumbdumbsquare} are multiples of 
each other, because the graded vector space of morphisms in $\foam$ between 
the bottom and top webs has dimension one in degree zero. 
Verifying this only requires computing the 
coefficient of $q^{-(4N-4)}$ (this includes the necessary shift!) in the 
MOY polynomial associated to the 
web 
$$\figins{-64}{2}{bigweb}\ ,$$
which is a standard calculation left to the reader. To see that the 
multiplicity coefficient is equal to one, we close both 
sides of relation~\eqref{eq:dumbdumbsquare} simply by putting a dot on each 
open end. Using relations~\eqref{eq:slidedotdist} and~\eqref{eq:dot6v} to 
reduce these closed diagrams, we see that both sides give the same 
non-zero sum of disjoint unions of 
coloured \emph{StartDot-EndDot} diagrams. Note that we have already proved 
that $\Fmv$ respects relations~\eqref{eq:slidedotdist} and~\eqref{eq:dot6v}. 
By applying foam relation~\eqref{eq:Fdot} to the images of all non-zero terms 
in the sum, we obtain a non-zero sum of dotted sheets. This implies that both 
sides of~\eqref{eq:dumbdumbsquare} have the same image under $\Fmv$.
\end{proof}

We have now proved that $\Fmv$ is a monoidal functor for all $N\geq 4$. Our 
main result about the whole family of these functors, i.e. for all $N\geq 4$ 
together, is the proposition below. It implies that all the defining 
relations in Soergel's category can be obtained from the corresponding 
relations between $sl(N)$ foams, when all $N\geq 4$ are considered, and 
that there are no other independent relations in Soergel's category 
corresponding to relations between foams.  

\begin{prop}
\label{prop:faithfulness}
Let $\ii,\jj$ be two arbitrary objects in $\SSoer$ and let 
$f\in\Hom(\ii,\jj)$ be arbitrary. If $\Fmv(f)=0$ for all $N\geq 4$, then 
$f=0$. 
\end{prop}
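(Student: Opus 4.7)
My plan is to prove the contrapositive: given any nonzero $f \in \Hom_{\SSoer}(\ii,\jj)$, I would exhibit an $N \geq 4$ for which $\Fmv(f) \neq 0$.

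The first step is to reduce to detection inside the ground ring $R := \End_{\SSoer}(\emptyset)$. The isotopy relations \eqref{eq:adj}--\eqref{eq:v6rot} exhibit biadjunctions between the generating objects of $\SSoer$, and since $\Fmv$ sends Merge and Split to genuine foam cups and caps, and respects the ``Isotopy relations'' as shown in the proof above, it preserves these biadjunctions. Combining them produces a trace pairing
\[
\tau_{\SSoer} : \Hom_{\SSoer}(\ii,\jj) \otimes \Hom_{\SSoer}(\jj,\ii) \longrightarrow R.
\]
By Elias--Khovanov's equivalence of $\mathcal{SC}$ with Soergel's bimodule category, combined with the symmetric Frobenius structure on the latter that categorifies the Markov trace on the Hecke algebra, $\tau_{\SSoer}$ is non-degenerate. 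Hence there is a $g \in \Hom_{\SSoer}(\jj,\ii)$ with $P := \tau_{\SSoer}(f,g) \neq 0$ in $R$.

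The second step is to check that the family $\{\Fmv|_R\}_{N \geq 4}$ is jointly injective. The ring $R$ is a polynomial algebra in generators coming from closed ``StartDot--EndDot'' diagrams, one for each colour. Using equation~\eqref{eq:Fdot} these generators map, essentially, to differences of $\bdot$'s on neighbouring vertical sheets of $\Fmv(1_n)$, and the dot-conversion relation $\figins{-8}{0.3}{plan-i}=0$ for $i \geq N$ from Proposition~\ref{prop:principal rels1} enforces $x_j^N = 0$ in the image but imposes no relation of lower degree. Consequently, for any fixed nonzero polynomial $P \in R$, the image $\Fmv(P) \neq 0$ in $\End_{\foam}(1_n)$ as soon as $N$ exceeds the maximal partial degree of $P$.

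Finally, because $\Fmv$ is a monoidal functor that preserves cups and caps, it intertwines the two trace pairings, so
\[
\Fmv(P) \;=\; \Fmv(\tau_{\SSoer}(f,g)) \;=\; \tau_{\foam}(\Fmv(f),\Fmv(g)).
\]
By the second step this is nonzero for some $N_0 \geq 4$, and therefore $\Fmv(f) \neq 0$ when $N = N_0$, as desired. The principal obstacle is the first step: rigorously extracting a non-degenerate trace pairing from the diagrammatic category and verifying that it is genuinely intertwined with the foam-side pairing induced by the Kapustin--Li evaluation. Once that compatibility is in place, the remainder of the argument reduces to straightforward applications of the dot-conversion, sphere and neck-cutting relations collected in Proposition~\ref{prop:principal rels1}.
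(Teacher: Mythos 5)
Your plan runs parallel to the paper's but the step you flag as "the principal obstacle" is a genuine gap, and the justification you gesture at does not fill it. You need non-degeneracy over $R=\End_{\SSoer}(\emptyset)$ of the pairing $\Hom(\ii,\jj)\times\Hom(\jj,\ii)\to R$ obtained by composing and closing off with cups and caps. Appealing to "the symmetric Frobenius structure categorifying the Markov trace" is the wrong structure: the categorified Markov trace is Hochschild homology of Soergel bimodules, which takes values in (a graded version of) $\bC$, not in $R$, and is precisely what the paper says is not yet understood diagrammatically. What you actually need is the $R$-linear perfectness of the Soergel Hom-pairing, which is a substantive theorem about Soergel bimodules and is neither cited nor derived in your outline; it does not follow formally from the existence of biadjunctions.

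The paper avoids the abstract non-degeneracy claim by using concrete freeness facts in its place. First it proves joint injectivity of $\Fmv|_R$ (your step 2) explicitly: close $\Fmv(f)$ by discs carrying $N-1-m_j$ dots on the left facet of colour $j$ and $N-1$ dots on the rightmost facet, so that after \eqref{eq:rd1} exactly the monomial $m$ of $f$ contributes a nonzero product of spheres, once $N\geq d+1$ where $f$ has degree $2d$. Next, by Corollaries 4.11--4.12 of Elias--Khovanov, $\Hom(\emptyset,\jj)$ is a free $R$-module of rank one generated by the disjoint union of StartDots coloured by $\jj$; any nonzero $f=p\cdot g_{\jj}$ is paired with the single "all EndDots" morphism to land on $p\cdot\bar g_{\jj}\in R$, which is nonzero because $R$ is a polynomial ring, and the first step applies. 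Finally, general $\ii,\jj$ reduce to $\ii=\emptyset$ by the adjunction in Corollary 4.12. In effect the paper realizes your $g$ as one explicit morphism whose non-vanishing against $f$ is guaranteed by the rank-one freeness of $\Hom(\emptyset,\jj)$, rather than by a general perfectness theorem. Your intertwining step and your degree bookkeeping for $\Fmv|_R$ are essentially correct; it is the sourcing of the dual element $g$ that needs the concrete input the paper supplies.
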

\begin{proof}
Let us first suppose that $\ii=\jj=\emptyset$. Suppose also that $f$ has 
degree $2d$ and that $N\geq \mbox{max}\{4,d+1\}$. Recall that, as shown in Corollary 3 
in ~\cite{EKr}, we know that 
$\Hom(\emptyset,\emptyset)$ is the free commutative polynomial ring generated 
by the StartDot-EndDots of all possible colors. So $f$ is a polynomial in 
StartDot-EndDots, and therefore a sum of monomials. Let $m$ be one of these 
monomials, no matter which one, and let $m_j$ denote the power 
of the StartDot-EndDot with color $j$ in $m$. Close $\Fmv(f)$ by 
glueing disjoint discs to the boundaries of all open simple facets 
(i.e. the vertical ones with corners in the pictures). 
For each color $j$, put $N-1-m_j$ dots on the left simple open facet 
corresponding to $j$ and also put $N-1$ dots on the rightmost simple open 
facet. 
Note that, after applying~\eqref{eq:rd1}, we get a linear combination of 
dotted simple spheres. Only one term survives and is equal to $\pm 1$, 
because only in that term each sphere has exactly $N-1$ dots. This shows that 
$\Fmv(f)\ne 0$, because it admits a non-zero closure. 

Now let us suppose that $\ii=\emptyset$ and $\jj$ is arbitrary. 
By Corollaries 4.11 and 4.12 in~\cite{EKh}, we know that 
$\Hom(\emptyset,\jj)$ is the free $\Hom(\emptyset,\emptyset)$-module of rank 
one generated by the disjoint union of StartDots coloured by $\jj$. 
Closing off the StartDots by putting dots on all open ends gives an element 
of $\Hom(\emptyset,\emptyset)$, whose image under $\Fmv$ is non-zero for $N$ 
big enough by the above. This shows that the generator of $\Hom(\emptyset,\jj)$ 
has non-zero image under $\Fmv$ for $N$ big enough, because $\Fmv$ is a 
functor.

Finally, the general case, for $\ii$ and $\jj$ arbitrary, can be reduced 
to the previous case by Corollary 4.12 in~\cite{EKh}.   
\end{proof}

%%%%%%%%%%%%%%%%%%%%%%%%%%%%%%%%%%%%%%%%%%%%%%%%%%%%%%%

\vspace*{1cm}

\noindent {\bf Acknowledgements}
The authors were supported by the Funda\c {c}\~{a}o para a Ci\^{e}ncia e a Tecnologia (ISR/IST plurianual funding) 
through the programme ``Programa Operacional Ci\^{e}ncia, Tecnologia, Inova\-\c{c}\~{a}o'' (POCTI) and the 
POS Conhecimento programme, cofinanced by the European Community fund FEDER. 

Pedro Vaz was also financially supported by the Funda\c {c}\~{a}o para a Ci\^{e}ncia e a Tecnologia through the post-doctoral fellowship SFRH/BPD/46299/ 2008.

%%%%%%%%%%%%%%%%%%%%%%%%%%%%%%%%%%%%%%%%
%%%                                  %%%
%%%            Bibliography          %%%
%%%                                  %%%
%%%%%%%%%%%%%%%%%%%%%%%%%%%%%%%%%%%%%%%%

%%%%%%%%%%%%%%%%%%% end of paper %%%%%%%%%%%%%%%%%%%%%%%%%
\end{document}